\documentclass[12pt]{amsart}
\usepackage{amsfonts}
\usepackage{amssymb}
\usepackage[letterpaper, left=2.5cm, right=2.5cm, top=2.5cm,
bottom=2.5cm,dvips]{geometry}
\usepackage{verbatim}
\setcounter{MaxMatrixCols}{10}

\usepackage[T1]{fontenc}
\usepackage{graphicx}
\usepackage{diagbox}
\usepackage{xcolor}
\usepackage{amsmath}
\usepackage{array,multirow}

\usepackage[backref]{hyperref}
\hypersetup{
	colorlinks,
	linkcolor={blue!60!black},
	citecolor={green!60!black},
	urlcolor={green!60!black}
}

\newtheorem{theorem}{Theorem}
\theoremstyle{plain}

\newtheorem{conjecture}{Conjecture}
\newtheorem{corollary}{Corollary}

\numberwithin{equation}{section}

\DeclareMathOperator{\arb}{arb}

\linespread{1.3}

\begin{document}
\title[Cooperative coloring of matroids]{Cooperative coloring of matroids}
\author{Tomasz Bartnicki}
\address{Stanis\l aw Staszic State University of Applied Sciences in Pi\l a, 64-920 Pi\l a, Poland}
\email{t.bartnicki@ans.pila.pl}

\author{Sebastian Czerwi\'{n}ski}
\address{Faculty of Mathematics, Computer Science, and Econometrics,
University of Zielona G\'{o}ra, 65-516 Zielona G\'{o}ra, Poland}
\email{s.czerwinski@wmie.uz.zgora.pl}

\author{Jaros\l aw Grytczuk}
\address{Faculty of Mathematics and Information Science, Warsaw University
of Technology, 00-662 Warsaw, Poland}
\email{grytczuk@tcs.uj.edu.pl}

\author{Zofia Miechowicz}
\address{Stanis\l aw Staszic State University of Applied Sciences in Pi\l a, 64-920 Pi\l a, Poland}
\email{z.miechowicz@ans.pila.pl}

\thanks{The third author was supported in part by Narodowe Centrum Nauki, grant 2020/37/B/ST1/03298.}

\begin{abstract}
Let $M_1,M_2,\ldots,M_k$ be a collection of matroids on the same ground set $E$. A coloring $c:E \rightarrow \{1,2,\ldots,k\}$ is called \emph{cooperative} if for every color $j$, the set of elements in color $j$ is independent in $M_j$. We prove that such coloring always exists provided that every matroid $M_j$ is itself $k$-colorable (the set $E$ can be split into at most $k$ independent sets of $M_j$). We derive this fact from a generalization of Seymour's list coloring theorem for matroids, which asserts that every $k$-colorable matroid is $k$-list colorable, too. We also point on some consequences for the game-theoretic variants of cooperative coloring of matroids.
\end{abstract}

\maketitle

\section{Introduction}
Let $M$ be a matroid on the set $E$ not containing loops. A coloring of $E$ is \emph{proper} if each color class is an independent set. The least number of colors needed is denoted by $\chi(M)$ and called the \emph{chromatic number} of $M$. If $M$ is a \emph{graphic} matroid of a graph $G$, then $\chi(M)$ coincides with the \emph{arboricity} of $G$---the least number of colors in an edge-coloring of $G$ with no monochromatic cycles. By the well known theorem of Edmonds \cite{Edmonds} (see \cite{Oxley}) we know that
\begin{equation}
	\chi(M)=\max_{A\subseteq E}\left\lceil \frac{|A|}{r(A)}\right\rceil,
\end{equation}
where $r(A)$ is the matroid \emph{rank} of $A$ in $M$. This shows that the matroid chromatic number $\chi(M)$ is a rather tame parameter as against the traditional chromatic number of a graph. This paradigm holds as well for other, more sophisticated coloring problems. Let us compare, for instance, the issue of \emph{choosability} in both, graphical and matroidal settings.

Suppose that each vertex $v$ of a graph $G$ is assigned a list $L(v)$ of colors. We say that $G$ is \emph{$L$-colorable} if there is a proper coloring of $G$ in which every vertex $v$ gets a color from its list $L(v)$. A graph $G$ is \emph{$k$-choosable} it it is $L$-colorable from arbitrary lists $L(v)$, with $|L(v)|=k$. The least $k$ for which $G$ is $k$-choosable is called the \emph{list chromatic number} of $G$, and denoted by $\chi_{\ell}(G)$. Clearly, these notions may be defined \emph{mutatis mutandis} for matroids, and we may speak of choosability and the list chromatic number $\chi_{\ell}(M)$ of a matroid $M$.

As is well known, the difference $\chi_{\ell}(G)-\chi(G)$ may be arbitrarily large already for bipartite graphs. However, Seymour proved \cite{Seymour} that in case of matroids there is actually no difference between the two parameters, that is, $\chi_{\ell}(M)=\chi(M)$ holds for every matroid $M$. The proof is a simple application of the Matroid Union Theorem (see \cite{Oxley}) (see the next section). Some inetersting extensions of this result were obtained by Laso\'{n} \cite{Lason1} and Laso\'{n} and Lubawski \cite{Lason-Lubawski}.

In this paper we generalize Seymour's result to \emph{systems} of matroids, which has consequences for the matroidal analog of \emph{cooperative} graph colorings, introduced by Aharoni, Holzman, Howard, and Spr\"{u}ssel \cite{Aharoni-H-H-S} (se also \cite{Aharoni-B-C-H-J}, \cite{Bradshaw}). Let $G_1,G_2,\ldots,G_r$ be a system of graphs on the same vertex set $V$. A coloring of $V$ by colors $\{1,2,\ldots,r\}$ is a \emph{cooperative} coloring if for every $i$, the subset of vertices in color $i$ is independent in the graph $G_i$. Similarly one may define a cooperative coloring of matroids.

Our main result states that every system of $k$ matroids on the common set $E$, each with chromatic number at most $k$, has a cooperative coloring.

\section{The result}

Seymour \cite{Seymour} proved that every matroid $M$ satisfies $\chi (M)= \chi_{\ell} (M)$. Using a similar argument based on the Matroid Union Theorem (see \cite{Oxley}), we prove an extension of this result to systems of matroids. Let us first recall the statement of the Matroid Union Theorem.

Let $M_1,M_2,\ldots,M_t$ be matroids on the ground sets $E_1,E_2,\ldots,E_t$, with the rank functions $r_1,r_2,\ldots,r_t$. Let $M$ be the \emph{union} of matroids $M_i$, that is, the unique matroid on $E=E_1\cup\cdots\cup E_t$ whose independent sets are all possible unions of the form $A_1\cup\cdots \cup A_t$, where $A_i$ is any independent set in $M_i$.

\begin{theorem}[Matroid Union Theorem]\label{Theorem MUT} Let $M_1,M_2,\ldots,M_t$ be a collection of matroids on the sets $E_1,E_2,\ldots,E_t$, with the rank functions $r_i$, and let $M$ be their union on the set $E=E_1\cup\cdots\cup E_t$ with the rank function $r$. Then, every set $A\subseteq E$ satisfies $$r(A)=\min_{X\subseteq A}\left(\sum_{i=1}^t r_i(X\cap E_i)+|A\setminus X|\right).$$
\end{theorem}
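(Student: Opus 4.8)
The plan is to prove the two inequalities between $r(A)$ and the right-hand side, which I abbreviate $\rho(A) := \min_{X \subseteq A}\bigl(\sum_{i=1}^t r_i(X \cap E_i) + |A \setminus X|\bigr)$, separately: the inequality $r(A) \le \rho(A)$ is a short counting argument, while $r(A) \ge \rho(A)$ carries the genuine combinatorial content.

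First I would establish $r(A) \le \rho(A)$. Let $I \subseteq A$ be a maximum independent set of the union $M$, so $|I| = r(A)$. By the definition of $M$ we may write $I = I_1 \cup \cdots \cup I_t$ with each $I_i$ independent in $M_i$, and after discarding repetitions I may assume the $I_i$ are pairwise disjoint with $I_i \subseteq E_i$. Fix any $X \subseteq A$ and split $I$ along $X$: since each $I_i \cap X$ is an independent subset of $X \cap E_i$, we have $|I \cap X| = \sum_i |I_i \cap X| \le \sum_i r_i(X \cap E_i)$, while trivially $|I \setminus X| \le |A \setminus X|$. Adding these gives $r(A) = |I| \le \sum_i r_i(X \cap E_i) + |A \setminus X|$, and minimising over $X$ yields $r(A) \le \rho(A)$.

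For the reverse inequality I would show that $\rho$ is itself the rank function of a matroid $N$ on $E$. Writing $\phi(X) = \sum_i r_i(X \cap E_i) - |X|$, so that $\rho(A) = |A| + \min_{X \subseteq A}\phi(X)$, the function $\phi$ is submodular: each $X \mapsto r_i(X \cap E_i)$ is submodular because $X \mapsto X \cap E_i$ respects unions and intersections and $r_i$ is submodular, and $|X|$ is modular. From submodularity of $\phi$ one deduces that $\rho$ is normalised, satisfies $\rho(A) \le \rho(A \cup e) \le \rho(A) + 1$, and is submodular; for the last point one takes minimisers $X_A \subseteq A$ and $X_B \subseteq B$ of $\phi$ and applies submodularity of $\phi$ to the pair $X_A \cap X_B \subseteq A \cap B$ and $X_A \cup X_B \subseteq A \cup B$. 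These three properties characterise a matroid rank function, so $N$ exists.

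It then remains to identify the independent sets of $N$ with those of $M$. Unwinding $\rho(I) = |I|$ shows that $I$ is independent in $N$ iff $\sum_i r_i(X \cap E_i) \ge |X|$ for every $X \subseteq I$; the union-of-independents condition implies this inequality by the splitting used above, so every set independent in $M$ is independent in $N$ (which recovers $r \le \rho$). The main obstacle is the converse implication: that the rank condition $\sum_i r_i(X \cap E_i) \ge |X|$ for all $X \subseteq I$ forces an explicit decomposition $I = I_1 \cup \cdots \cup I_t$ into independent sets. This is the real heart of the matter, and I expect to prove it by induction — either on $t$, reducing the general case to the union of two matroids via associativity of the union, or directly by an augmentation argument that enlarges a partial decomposition along an alternating sequence whenever an uncovered element exists (a Rado-type argument in the spirit of matroid intersection). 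Once this is settled, $N = M$, so a maximum independent set of $N$ inside $A$ has size $\rho(A)$ and is independent in $M$; hence $r(A) \ge \rho(A)$, and equality follows.
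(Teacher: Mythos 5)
The paper itself does not prove this theorem --- it is quoted from Oxley's book and used as a black box --- so there is no in-paper argument to compare against, and I am judging your proposal on its own terms. Your easy direction is correct and complete: taking a maximum independent subset $I\subseteq A$ of the union, decomposing it into pairwise disjoint sets $I_i\subseteq E_i$ independent in $M_i$, and splitting along an arbitrary $X\subseteq A$ does give $r(A)\le\rho(A)$. Your construction of the auxiliary matroid $N$ from the submodular function $\phi(X)=\sum_i r_i(X\cap E_i)-|X|$ is also sound; the verification that $\rho$ is normalised, monotone with unit increments, and submodular goes through exactly as you sketch, and the characterisation of the $N$-independent sets as those $I$ with $\sum_i r_i(X\cap E_i)\ge|X|$ for all $X\subseteq I$ is correct.

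However, there is a genuine gap, and you have located it yourself: the claim that every set $I$ satisfying this rank condition actually admits a partition $I=I_1\cup\cdots\cup I_t$ with each $I_i$ independent in $M_i$. This is not a finishing touch to be ``settled'' later --- it is the entire content of the Matroid Union Theorem (equivalently, the Nash--Williams/Rado partition criterion), and everything you have actually proven (the counting bound and the fact that $\rho$ is a matroid rank function) would remain true even if the theorem were false, with $M$ strictly smaller than $N$. You name two plausible strategies, induction on $t$ via associativity of the union (reducing to $t=2$) or a direct augmenting-sequence argument, but you carry out neither. The augmentation route in particular hinges on a nontrivial exchange lemma: given a partial decomposition of $I$ that covers fewer than $|I|$ elements, one must produce an alternating sequence of exchanges across the different matroids that terminates at an element freely addable to some $I_i$, and the rank condition enters precisely in showing such a sequence exists. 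Until that lemma is stated and proved, the inequality $r(A)\ge\rho(A)$ is unsupported, so what you have is a correct reduction of the theorem to its own key lemma rather than a proof.
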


This theorem implies a simple criterion for matroid $k$-colorability.
\begin{corollary}\label{Corollary MUT}
	Let $M$ be a matroid on the set $E$ with the rank function $r$. Then, $\chi (M)\leqslant k$ if and only if every subset $X\subseteq E$ satisfies 
	\begin{equation}
		r(X)\geqslant \frac{1}{k}\cdot |X|.
	\end{equation}
\end{corollary}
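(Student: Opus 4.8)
The plan is to reduce the statement to the Matroid Union Theorem applied to $k$ disjoint copies of $M$. First I would record the reformulation of colorability: a proper coloring of $M$ with at most $k$ colors is exactly a covering of $E$ by $k$ independent sets of $M$, and since every subset of an independent set is again independent, this is the same as writing $E$ as a union $A_1\cup\cdots\cup A_k$ with each $A_i$ independent in $M$. In other words, $\chi(M)\leqslant k$ holds if and only if $E$ is an independent set in the union matroid $M'$ of $k$ copies of $M$ (each copy taken on the ground set $E$, with rank function $r$).

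Next I would apply Theorem \ref{Theorem MUT} to $M'$ with $A=E$. Writing $r'$ for the rank function of $M'$ and using that all $k$ constituent matroids coincide with $M$, the formula collapses to
$$r'(E)=\min_{X\subseteq E}\left(k\,r(X)+|E\setminus X|\right)=\min_{X\subseteq E}\left(k\,r(X)+|E|-|X|\right).$$

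Then comes the arithmetic translation. The set $E$ is independent in $M'$ precisely when $r'(E)=|E|$. Taking $X=\emptyset$ shows that the minimum above is at most $|E|$, so the equality $r'(E)=|E|$ is equivalent to the reverse inequality holding for \emph{every} $X$, namely $k\,r(X)+|E|-|X|\geqslant |E|$ for all $X\subseteq E$. Cancelling $|E|$ and dividing by $k$ turns this into exactly $r(X)\geqslant |X|/k$ for all $X\subseteq E$. Chaining the equivalences---$\chi(M)\leqslant k$ iff $E$ is independent in $M'$ iff $r'(E)=|E|$ iff the rank inequality holds for every $X$---yields the corollary.

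The steps are all routine once the reformulation is in place; the only point that needs a word of care is the equivalence between ``$E$ is coverable by $k$ independent sets'' and ``$E$ is independent in the $k$-fold union,'' together with the observation that $r'(E)\leqslant |E|$ automatically, so that only one direction of the inequality is the binding constraint. I expect no genuine obstacle here, since the entire content is carried by the Matroid Union Theorem.
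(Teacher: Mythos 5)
Your proof is correct and follows exactly the route the paper intends: the paper states the corollary as an immediate consequence of the Matroid Union Theorem without writing out the details, and your argument (identify $\chi(M)\leqslant k$ with independence of $E$ in the $k$-fold union of $M$, then unwind the rank formula) is the standard derivation. The one point needing care---that a cover of $E$ by $k$ independent sets can be refined to a partition because independence is hereditary---is handled properly.
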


Using these results we may prove the following theorem.

\begin{theorem}\label{Theorem Matorid System}
	Let $t\geqslant k$ and let $N_1,N_2,\ldots,N_t$ be matroids on the same ground set $E$ satisfying $\chi (N_i)\leqslant k$, for all $i=1,2,\ldots,t$. Suppose that to each element $e\in E$, a list $L(e)\subseteq \{1,2,\ldots,t\}$ with $|L(e)|=k$, is assigned. Then there is a coloring $f:E\rightarrow \{1,2,\ldots,t\}$ such that
	\begin{itemize}
		\item[(i)] $f(e)\in L(e)$, for every $e\in E$,
		\item[(ii)] $f^{-1}(i)$ is independent in $N_i$, for all $i=1,2,\ldots,t$.
	\end{itemize}
\end{theorem}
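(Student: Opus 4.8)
The plan is to mimic Seymour's argument, but with a separate matroid for each color, and to reduce the existence of the desired coloring to a single application of the Matroid Union Theorem (Theorem \ref{Theorem MUT}). The key translation is this: a coloring $f$ satisfying (i) and (ii) is exactly a way of writing $E$ as a union $E = I_1 \cup \cdots \cup I_t$ in which each part $I_i$ is independent in $N_i$ and is contained in the set $E_i := \{e \in E : i \in L(e)\}$ of elements permitted to take color $i$. I would therefore first introduce, for each $i \in \{1,\ldots,t\}$, the restricted matroid $M_i := N_i|_{E_i}$, whose independent sets are precisely the subsets of $E_i$ that are independent in $N_i$, and whose rank function I denote $r_i$. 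Since every element $e$ has $|L(e)| = k \geqslant 1$, the sets $E_i$ cover $E$, so the union matroid $M := M_1 \cup \cdots \cup M_t$ lives on $E$.

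The second step is to observe that the sought coloring exists if and only if $E$ is independent in $M$. Indeed, an independent decomposition of $E$ in $M$ returns sets $I_i \subseteq E_i$ independent in $N_i$ whose union is $E$, which yields $f$ by assigning each element to one of the parts containing it; conversely, the color classes of such an $f$ form exactly such a decomposition. Thus it suffices to prove $r(E) = |E|$, where $r$ is the rank function of $M$.

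For the third step I would invoke Theorem \ref{Theorem MUT}, which gives $r(E) = \min_{X \subseteq E}\left(\sum_{i=1}^t r_i(X \cap E_i) + |E \setminus X|\right)$, so that $r(E) = |E|$ is equivalent to the inequality $\sum_{i=1}^t r_i(X \cap E_i) \geqslant |X|$ holding for every $X \subseteq E$. Here the two hypotheses combine cleanly. On one hand, since $\chi(N_i) \leqslant k$, Corollary \ref{Corollary MUT} applied to $N_i$ gives $r_i(X \cap E_i) \geqslant \frac{1}{k}|X \cap E_i|$ for each $i$. On the other hand, a double count over the lists gives $\sum_{i=1}^t |X \cap E_i| = \sum_{e \in X} |L(e)| = k|X|$, because $e \in E_i$ precisely when $i \in L(e)$ and every list has exactly $k$ entries. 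Summing the per-matroid bounds then produces $\sum_{i=1}^t r_i(X \cap E_i) \geqslant \frac{1}{k}\sum_{i=1}^t |X \cap E_i| = |X|$, as required.

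I expect the genuine content to sit in this verification rather than in any single hard estimate; the main thing to get right is the bookkeeping in the union, namely setting up the restriction matroids $M_i$ so that a part being independent in $M_i$ simultaneously encodes ``independent in $N_i$'' and ``respects the list,'' and then exploiting the uniform size $|L(e)| = k$ in the double-counting identity. The hypothesis $t \geqslant k$ enters only to ensure that lists of size $k$ inside $\{1,\ldots,t\}$ can exist at all, and plays no role in the rank computation. Finally, converting the independent decomposition of $M$ into a genuine coloring is routine: any element lying in several parts $I_i$ may be assigned arbitrarily to one of them, and subsets of independent sets remain independent.
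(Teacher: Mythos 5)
Your proposal is correct and follows essentially the same route as the paper's own proof: restrict each $N_i$ to $E_i=\{e: i\in L(e)\}$, apply the Matroid Union Theorem together with Corollary \ref{Corollary MUT} and the double count $\sum_i |X\cap E_i|=k|X|$ to show $r(E)=|E|$, and then read off the coloring from an independent decomposition of $E$. The only cosmetic difference is that you phrase the rank computation as a verification over all $X\subseteq E$ rather than fixing a minimizing $X$, which changes nothing.
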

\begin{proof}Let $E_i=\{e\in E:i\in L(e)\}$ and let $M_i=N_i\arrowvert _{E_i}$ be the restriction of the matroid $N_i$ to the set $E_i$. Denote by $r_i$ the rank functions of matroids $M_i$ and let $M$ be their union with the rank function $r$. Clearly, all matroids $M_i$ also satisfy $\chi (M_i)\leqslant k$. By the Matroid Union Theorem, there is some $X\subseteq E$ for which 
	\begin{equation}
		r(E)=\sum_{i=1}^t r_i(X\cap E_i)+|E\setminus X|.
	\end{equation}
	By  Corollary \ref{Corollary MUT} of the Matroid Union Theorem we have
	\begin{equation}
		r_i (X\cap E_i)\geqslant \frac{1}{k}\cdot |X\cap E_i|,
	\end{equation}
	for all $i=1,2,\ldots,t$. Therefore we may write
	\begin{equation}
		r(E)\geqslant\frac{1}{k}\cdot \sum_{i=1}^t |X\cap E_i|+|E\setminus X|.
	\end{equation}
	On the other hand, since every list $L(e)$ has size $k$, we have
	\begin{equation}
		\sum_{i=1}^t |X\cap E_i|=k\cdot|X|.
	\end{equation}
	Hence, it follows that
	\begin{equation}
		r(E)\geqslant\frac{1}{k}\cdot k\cdot|X|+|E\setminus X|=|X|+|E\setminus X|=|E|.
	\end{equation}
	This means that $E$ is an independent set in the matroid $M$. Since $M$ is the union of matroids $M_i$, it follows that $E=A_1\cup\cdots\cup A_t$ with $A_i$ independent in $M_i=N_i\arrowvert_{E_i}$. Clearly we may assume that $A_i$ are pairwise disjoint. Now one may define the coloring $f$ so that $f(e)=i$ whenever $e\in A_i$, which completes the proof.
\end{proof}
The following corollary confirms the existence of a cooperative coloring for arbitrary systems of matroids with a common upper bound on the chromatic number. 
\begin{corollary}\label{Corollary Matroid System}
Every system of $k$-colorable matroids $M_1,M_2,\ldots, M_k$ on the same ground set $E$ has a cooperative coloring.
\end{corollary}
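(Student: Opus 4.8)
The plan is to obtain this as a direct specialization of Theorem \ref{Theorem Matorid System}. First I would set $t=k$ and take $N_i=M_i$ for each $i=1,2,\ldots,k$. The hypothesis that each $M_i$ is $k$-colorable means precisely that $\chi(M_i)\leqslant k$, which is exactly the condition $\chi(N_i)\leqslant k$ required by the theorem, so all of its structural assumptions are already in force.

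Next I would assign to every element $e\in E$ the full list $L(e)=\{1,2,\ldots,k\}$. Since $t=k$, this list has size exactly $k$, matching the requirement $|L(e)|=k$. With these choices in place, Theorem \ref{Theorem Matorid System} yields a coloring $f:E\rightarrow\{1,2,\ldots,k\}$ satisfying conditions (i) and (ii). Condition (i), that $f(e)\in L(e)$, holds automatically because each list is the entire color set; condition (ii) states exactly that $f^{-1}(i)$ is independent in $N_i=M_i$ for every $i$, and this is precisely the defining property of a cooperative coloring. Reading $f$ as the desired coloring $c$ therefore finishes the argument.

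Because the reduction is a pure specialization of the parameters of the preceding theorem, I do not anticipate any genuine obstacle. The only point that must be checked carefully is that the list-size and range conventions align, namely that choosing $t=k$ turns the full color set $\{1,2,\ldots,k\}$ into a list of the correct cardinality $k$, so that the hypotheses of Theorem \ref{Theorem Matorid System} are met verbatim. Once this bookkeeping is confirmed, the corollary follows immediately.
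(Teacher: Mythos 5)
Your proposal is correct and is exactly the paper's own argument: the authors likewise take $t=k$ and $L(e)=\{1,2,\ldots,k\}$ for every $e\in E$ in Theorem \ref{Theorem Matorid System}. Your version merely spells out the bookkeeping that the paper leaves implicit.
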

\begin{proof}
	It suffices to take $t=k$ and $L(e)=\{1,2,\ldots,k\}$ for every $e\in E$, in Theorem \ref{Theorem Matorid System}.
\end{proof}
Let us illustrate the above corollary with an example. Let $G$ be a simple graph with the set of edges $E$. Suppose that $\arb(G)=2$. Let $\pi$ be any permutation of the set $E$. Then, by Coraollary \ref{Corollary Matroid System}, there is a forest $F$ in $G$ such that $\pi(E\setminus F)$ is also a forest in $G$. Clearly, this can be generalized to graphs with arboricity $k\geqslant2$ and any set of $k-1$ permutations of $E$.

Let us also mention that our result on cooperative coloring can be extended to the following game-theoretic settings considered by Laso\'{n} in \cite{Lason0} and \cite{Lason2}. The board of the game is a fixed set $E$, together with a collection of matroids $M_1,M_2,\ldots, M_t$ on $E$, and a fixed set of colors, $C=\{1,2,\ldots,t\}$. There are two players, Ann and Ben, who collectively color the set $E$ accordingly to the rule that the set of elements in color $i$ must be independent in matroid $M_i$. In every round, Ann points on a chosen uncolored element $e\in E$ and Ben colors it by a chosen color. Ann wishes the whole set $E$ to be colored, while Ben tends to prevent that from happening. If Ann has a strategy guaranteeing that she achieves her goal, then we say that the system $M_1,M_2,\ldots,M_t$ has a \emph{cooperative indicated coloring}.

Using the result of Laso\'{n} \cite{Lason0} and our Corollary \ref{Corollary Matroid System} we get immediatelly the following statement.

\begin{corollary}\label{Corollary Indicated}
	Every system of $k$-colorable matroids $M_1,M_2,\ldots,M_k$ on the set $E$ has a cooperative indicated coloring.
\end{corollary}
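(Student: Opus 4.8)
The plan is to split the statement into two independent ingredients and then glue them together. The first ingredient is the purely existential fact, supplied by Corollary~\ref{Corollary Matroid System}, that the system $M_1,M_2,\ldots,M_k$ admits \emph{some} cooperative coloring. The second ingredient is the game-theoretic engine of Laso\'{n}~\cite{Lason0}, who analyses precisely the indicated coloring game on a system of matroids and shows that, for matroids, the indicated variant is no harder than the static one: the mere existence of a cooperative coloring already forces a winning strategy for Ann. With these two in hand the derivation is a two-line skeleton — first invoke Corollary~\ref{Corollary Matroid System} to obtain a cooperative coloring of $M_1,\ldots,M_k$, and then feed this into Laso\'{n}'s theorem to upgrade it to a cooperative \emph{indicated} coloring. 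This is the content of the phrase ``we get immediately''.

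To make the argument self-contained rather than a bare citation, I would reconstruct the strategy that underlies Laso\'{n}'s result. Ann plays so as to maintain, after every round, the invariant that the current partial coloring still extends to a full cooperative coloring. After Ben colors an element $e$ by color $i$, the constraint imposed on the remaining color-$i$ elements is governed by contracting $e$ in $M_i$, while $e$ is simply deleted from all other matroids; the exchange properties of matroids are what allow Ann to point at elements in an order that keeps a completion available no matter which legal color Ben selects. The natural way to verify that completions persist is to re-run the Matroid Union argument behind Theorem~\ref{Theorem Matorid System} at the residual position, treating the colors not yet forbidden at each remaining element as its list.

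The step I expect to be the main obstacle is exactly this preservation of completability against Ben's adversarial choices. The tempting shortcut — restrict and contract the system at each position and then reapply Corollary~\ref{Corollary Matroid System} to the residual matroids — fails, because contraction can raise the chromatic number above $k$: contracting a single element of a rank-one uniform matroid turns every remaining element into a loop, so the per-color chromatic bound $\chi(M_i)\leqslant k$ is \emph{not} inherited by the contracted matroids. Thus the chromatic numbers of the contracted matroids are the wrong invariant to track. The resolution, which is the genuine content of Laso\'{n}~\cite{Lason0}, is to formulate the invariant in terms of the existence of list completions of the \emph{original} system (as in Theorem~\ref{Theorem Matorid System}) rather than in terms of chromatic numbers of contracted matroids, so that the Matroid Union criterion can be reapplied at every reachable position and Ann's completion is guaranteed to survive each of Ben's moves; this is the one place where I would lean on Laso\'{n}'s theorem rather than reprove it from scratch.
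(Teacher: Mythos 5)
Your proposal matches the paper's derivation exactly: the paper offers no proof beyond the single sentence that the corollary follows immediately from Laso\'{n}'s indicated-coloring theorem \cite{Lason0} combined with Corollary~\ref{Corollary Matroid System}, which is precisely your two-ingredient skeleton. Your additional sketch of the invariant underlying Laso\'{n}'s strategy (and the correct warning that contracted matroids need not remain $k$-colorable, so extendability rather than chromatic number is the right invariant) goes beyond what the paper records but is consistent with it.
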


In another game-theoretic variant, Ann and Ben color the vertices alternately by choosing in each round an uncolored element $e\in E$ and a color $i\in C$ for it. All other rules are the same (see \cite{Bartnicki-G-K-Z} for a historical survey). If Ann has a strategy to finish the game with coloring the whole set $E$, then we say that the system of matroids $M_1,M_2,\ldots,M_t$ has a \emph{cooperative game coloring}.

Using another result of Laso\'{n} \cite{Lason2} and Corollary \ref{Corollary Matroid System} we get the following statement.

\begin{corollary}\label{Corollary Game}
	Every system of $k$-colorable matroids $M_1,M_2,\ldots,M_{2k}$ on the set $E$ has a cooperative game coloring.
\end{corollary}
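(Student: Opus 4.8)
The plan is to argue exactly as in the proof of Corollary~\ref{Corollary Indicated}, feeding an off-line cooperative list-coloring result into a game-theoretic reduction. The two ingredients are the off-line guarantee supplied by Theorem~\ref{Theorem Matorid System} and the game principle of Laso\'{n}~\cite{Lason2}; the jump from $k$ to $2k$ matroids is precisely what compensates for the presence of the adversary Ben, who gets to color roughly half of the board.

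First I would recast the game as a \emph{list game}. At any position, after some legal partial coloring has been played, call a color $i$ \emph{available} for an uncolored element $e$ if the set of elements already colored $i$ remains independent in $M_i$ after $e$ is added to it; write $A(e)\subseteq\{1,\ldots,2k\}$ for the set of such colors. Ann loses only if at some stage an uncolored element $e$ has $A(e)=\emptyset$, and she wins as soon as she can keep coloring until $E$ is exhausted. Thus the entire question reduces to controlling how the available lists $A(e)$ shrink as play proceeds, and to certifying that the uncolored part of the board can always be finished.

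Next I would record the off-line input. Applying Theorem~\ref{Theorem Matorid System} with $t=2k$ and list size $k$ shows that for \emph{any} assignment of size-$k$ lists $L(e)\subseteq\{1,\ldots,2k\}$ the system $M_1,\ldots,M_{2k}$ admits a cooperative coloring respecting those lists. This is the robustness property that drives Ann's strategy: as long as every uncolored element still has at least $k$ available colors, the remaining board admits an off-line cooperative completion, so Ann is never truly stuck. Laso\'{n}'s theorem in \cite{Lason2} packages exactly this implication, converting an off-line cooperative list-coloring guarantee with lists of size $k$ out of $2k$ colors into a winning strategy for Ann in the alternating game; plugging in the guarantee just established yields the corollary.

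The hard part, and the content delegated to \cite{Lason2}, is the bookkeeping that keeps the invariant $|A(e)|\ge k$ alive throughout the game. The subtlety is that a single move of Ben---coloring one element $x$ with one color $i$---can make $i$ unavailable for \emph{many} uncolored elements at once, namely every $e$ lying in the span of the enlarged color-$i$ class but not in the span of the old one; so a naive count of ``one lost color per round'' fails. The alternating structure together with the matroid exchange axiom is what tames this: Ann spends her moves protecting the most endangered elements, and the doubling of the palette from $k$ to $2k$ guarantees she always stays a full $k$ colors ahead of Ben's damage. In a fully self-contained treatment this potential/strategy argument would be the main obstacle, but here it is exactly the statement of Laso\'{n}'s result, so the proof of the corollary is the short reduction above.
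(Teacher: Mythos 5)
There is a genuine gap here, and it sits exactly where you place the weight of the argument: on what Laso\'{n}'s Theorem~1 in \cite{Lason2} actually says. That theorem does not convert ``an off-line cooperative list-coloring guarantee with lists of size $k$ out of $2k$ colors'' into a winning strategy for Ann; its hypothesis is a concrete combinatorial structure, namely a \emph{double covering} of $E$ by sets $A_1,\ldots,A_t$ with $A_i$ independent in $M_i$ and each element lying in two of the $A_i$. Your proposal never produces such a covering. The paper's proof does: it splits the $2k$ matroids into two groups of $k$, applies Corollary~\ref{Corollary Matroid System} to each group to obtain two partitions of $E$ into independent sets, and observes that together these $2k$ sets cover every element exactly twice; only then is \cite{Lason2} invoked. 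So the full list-coloring strength of Theorem~\ref{Theorem Matorid System} with $t=2k$ and lists of size $k$ is not what is needed here --- what is needed is two disjoint applications of the $t=k$ case.

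Your fallback invariant is also not sound as stated. Even if every uncolored element $e$ retained $k$ available colors $A(e)$, you cannot conclude from Theorem~\ref{Theorem Matorid System} that the remaining board is completable: a completion must extend the existing color classes $C_i$, so the relevant matroids for the residual instance are the contractions $M_i/C_i$ restricted to the uncolored elements, and contraction can increase the chromatic number (contract one element of $U_{2,4}$ and the chromatic number jumps from $2$ to $3$). Hence the hypothesis $\chi(N_i)\leqslant k$ of Theorem~\ref{Theorem Matorid System} is not available for the residual system, and the ``robustness'' property driving your strategy does not follow from anything established in the paper. The reliable route is the paper's: build the $2$-covering first, and let Laso\'{n}'s strategy --- which maintains and updates the sets $A_i$ via matroid exchange after each of Ben's moves --- carry the game-theoretic load.
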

\begin{proof}
Indeed, let us split the system into two parts $M_1,M_2,\ldots,M_k$ and $M_{k+1},M_{k+2},\ldots,M_{2k}$. Each of these two systems satisfies the assertion of Corollary \ref{Corollary Matroid System}. Hence there exists sets $A_i$ such that $A_i$ is independent in $M_i$ and the union $\bigcup_{i=1}^{2k}A_i$ is a $2$-covering of the set $E$ (each element $e\in E$ belongs to exactly two sets $A_i$). The assertion follows now from Theorem 1 in \cite{Lason2}.
\end{proof}

\section{Open problems}

Let us conclude this short note with just one open problem concerning yet another game-theoretic variant of cooperative coloring. In the single graph setting it was introduced independently by Schauz \cite{Schauz} and Zhu \cite{Zhu}.

As before, the game is palyed between Ann and Ben on a given system of matroids. So, let $M_1,M_2,\ldots,M_t$ be a system of $k$-colorable matroids on the same ground set $E$. In the $i$-th round of the game, Ben picks a subset $B_i\subseteq E$, and then Ann colors a subset $A_i\subseteq B_i$ by color $i$ so that $A_i$ is an independent set of $M_i$. Each element $e\in E$ can be picked at most $k$ times unless it is finally colored. Ann wins if the whole set is colored with this constraint after at most $t$ rounds. If she has a winning strategy, then the system $M_1,M_2,\ldots,M_t$ is said to have a \emph{cooperative painting}.

\begin{conjecture}\label{Conjecture Online}
	Every system of $k$-colorable matroids $M_1,M_2,\ldots,M_k$ on the set $E$ has a cooperative painting.
\end{conjecture}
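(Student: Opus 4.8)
The plan is to read this conjecture as the \emph{online} (adaptive) counterpart of Theorem~\ref{Theorem Matorid System}, in exactly the way that the painting game is the online counterpart of list coloring. For ordinary graphs the analogous ``painting number'' can strictly exceed the choice number, so a positive answer would once again express the special tameness of matroids that already drives Seymour's theorem and our Corollary~\ref{Corollary Matroid System}. I would first isolate the single-matroid instance (all $M_i$ equal to one $k$-colorable matroid $M$), proving the online analog of Seymour's theorem---that every $k$-colorable matroid is \emph{$k$-paintable}---and only then attempt the genuinely cooperative case.

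Ann's strategy would be governed by a \emph{token-weighted Matroid-Union invariant}. At any moment let $U\subseteq E$ be the uncolored elements, let $R\subseteq\{1,\dots,t\}$ be the colors (rounds) not yet used, and for $e\in U$ let $\tau(e)$ be the number of times $e$ may still be picked (its remaining budget, initially $k$). In the single-matroid base case the invariant I would maintain is the fractional rank condition
\[
\text{for every } X\subseteq U:\qquad \sum_{e\in X}\frac{1}{\tau(e)}\ \le\ r(X),
\]
which collapses to the criterion of Corollary~\ref{Corollary MUT} (namely $r(X)\ge |X|/k$) when all budgets equal $k$, and which forbids any uncolored element from getting stuck, since the instance $X=\{e\}$ forces $\tau(e)\ge 1$ for every non-loop. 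For the full system the invariant becomes the corresponding Matroid-Union feasibility statement: a for-all-$X$ inequality bounding a token-weighted size of $X$ by $\sum_{j\in R} r_j(X)$. Initial validity of either version is then precisely the hypothesis $\chi(N_i)\le k$ read through Corollary~\ref{Corollary MUT}.

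For the round step I would have Ann respond to Ben's set $B_i$ by coloring an independent set $A_i\subseteq B_i\cap U$ that (i) is a \emph{basis} of the picked uncolored elements in $M_i$, so that the elements of $(B_i\cap U)\setminus A_i$ losing a token are all spanned by $A_i$, and (ii) contains every budget-critical element, i.e. every picked $e$ with $\tau(e)=1$; the invariant applied to $X$ equal to the set of such elements guarantees they are independent, so coloring them is legal. Submodularity and the exchange property of $r_i$ would then be used to show that this single greedy move restores the invariant on the residual instance $(U\setminus A_i,\tau',R\setminus\{i\})$. Phrased as an induction on the total remaining budget $\sum_{e\in U}\tau(e)$, this is what would drive the argument to termination.

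The hard part will be the coordination \emph{across} the distinct matroids under online revelation of the lists. In the offline proof of Theorem~\ref{Theorem Matorid System} one may choose the worst set $X$ \emph{after} all lists $L(e)$ are known and then invoke the Matroid Union Theorem a single time; here Ann must commit to an irrevocable $A_i$ that simultaneously preserves the for-all-$X$ condition for \emph{every} remaining matroid, against an adversary who reveals which colors each element receives only gradually through his choice of the sets $B_i$. Establishing that one basis choice in $M_i$ cannot damage the feasibility certificate of the other matroids $M_j$ is the crux, and I expect it to require an uncrossing argument---or, equivalently, a min-max/LP-duality reformulation of the invariant---rather than a direct rank computation. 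It is even conceivable that the exact invariant above is slightly too strong or too weak and must be replaced by a more refined potential; pinning down the correct online-robust certificate is, in my view, exactly where the difficulty lies, and the reason the statement is posed here only as a conjecture.
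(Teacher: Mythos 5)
This statement is posed in the paper as an open conjecture; the authors give no proof, noting only that the special case $M_1=\cdots=M_k$ was settled by Laso\'{n} and Lubawski \cite{Lason-Lubawski}. Your proposal is therefore not being measured against a hidden argument---there is none---and, as written, it is a research plan rather than a proof. The genuine gap is the one you yourself flag in the final paragraph: the entire content of the conjecture beyond the known single-matroid case lies in showing that Ann's irrevocable choice of a basis $A_i$ of $B_i\cap U$ in $M_i$ restores your token-weighted rank invariant \emph{simultaneously} for all remaining matroids $M_j$, $j\in R\setminus\{i\}$. You assert that ``submodularity and the exchange property'' would do this, but no such argument is given, and it is far from automatic: an element of $(B_i\cap U)\setminus A_i$ loses a token because it is spanned by $A_i$ \emph{in $M_i$}, which says nothing about its rank contribution in $M_j$, so the tightening of the left-hand side $\sum_{e\in X}1/\tau(e)$ on sets $X$ meeting $(B_i\cap U)\setminus A_i$ is not offset by any controlled change on the right-hand side $r_j(X)$. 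This cross-matroid coordination is exactly what makes the cooperative version harder than the Laso\'{n}--Lubawski theorem, and your sketch leaves it entirely open.

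That said, your framing is sound and consistent with the paper's perspective: the conjecture is indeed the online analog of Theorem~\ref{Theorem Matorid System} in the same way that painting is the online analog of list coloring, the base case you isolate is precisely the known result of \cite{Lason-Lubawski}, and an invariant of Matroid-Union-Theorem type (reducing to Corollary~\ref{Corollary MUT} when all budgets equal $k$) is the natural candidate certificate. But a candidate invariant plus an honest admission that the inductive step is ``the crux'' does not constitute a proof; if you can actually carry out the uncrossing or LP-duality argument you allude to, you will have resolved the open problem rather than reproduced an existing proof.
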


If $M_1=\cdots =M_k$, then the statement of the conjecture is true, as proved by Laso\'{n} and Lubawski in \cite{Lason-Lubawski}. Notice that this result is yet another extension of the theorem of Seymour on the choosability of matroids.

\end{document}